\newcommand{\N}{\mathbb{N}}
\newcommand{\Z}{\mathbb{Z}}
\newcommand{\T}{\mathbb{T}}
\newcommand{\R}{\mathbb{R}}
\newcommand{\Q}{\mathbb{Q}}
\newtheorem{theorem}{Theorem}
\newtheorem{lemma}{Lemma}
\newtheorem{corollary}{Corollary}
\title{Weakly Mixing Systems with Dense Prime Orbits}
\author{Aaron Benda}
\date{July 2019}
\begin{document}

\maketitle

\begin{abstract}
   We show existence of smooth, weakly mixing reparametrizations of some linear flows on $\mathbb{T}^2$ for which all orbits sampled at prime times are dense.
\end{abstract}

\section{Introduction}
In this paper, we study orbits of a class of smooth dynamical systems sampled at prime times. More precisely, given a topological dynamical system $(X,T)$ we study the behavior of $\{T^px\}_{p \text{ prime}}$ for $x\in X$. In general, understanding prime orbits is a difficult task; there are only a few examples of dynamical systems for which all prime orbits are well understood (see \cite{Bourgain2}, \cite{Vinogradov}, \cite{Green-Tao}). In two recent papers (\cite{Kan} and \cite{KLR}), the authors developed new techniques for studying prime orbits for dynamical systems that are different than the classical methods (primarily that of type I and type II sums). Their methods are more focused on the periodic structure of the system, and provide the inspiration for techniques in this paper. In Corollary 1.2 in \cite{Kan}, Kanigowski established existence of smooth weakly mixing flows for which all prime orbits are equidistributed. In this Theorem, the class of flows are analytic reparametrizations of Liouvillean linear flows on $\T^2$. To get equidistribution, the author needs to construct the set of Liouvillean frequency implicitly. The results of this paper are similar, and some of the key estimates are in the same spirit as Theorem 1.3 from \cite{Kan} as well. The purpose of this note is to provide a much simpler argument for the existence of a class of such reparametrizations of some linear flows (with much milder assumptions on the frequencies than in \cite{Kan}) for which all prime orbits are {\em dense} (and in fact are equidistributed along a subsequence).

For an irrational $\alpha$ let $T^\alpha_t:\T^2\to \T^2$ be the linear flow in direction $\alpha$. For a (smooth) function $r:\T^2\to \R^+$, let $(T^{\alpha,r}_t)$ denote the {\em reparametrization} given by $r$ with the (unique) invariant measure $\mu_r$ (see section 3 for the definitions). Our main result is the following:

\begin{theorem}
For uncountably many irrational $\alpha$, there is a smooth $r: \T^2 \to \R^+$ such that the flow $(T^{\alpha,r}_t)$ on $(\T^2,\mu_r)$ is weakly mixing and there is an increasing sequence of numbers $(N_k) \to +\infty$ such that for all $x \in \T^2$, the sequence $\{T^{\alpha,r}_p(x) \mid p \text{ is prime}, p < N_k \}$ is equidistributed in $\T^2$.
\end{theorem}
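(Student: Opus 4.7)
My plan is to build $r$ as a small smooth perturbation $r=c+\tilde r$ of a positive irrational constant $c$, exploiting the fact that density (unlike equidistribution) of prime orbits is a soft, stable property. The constant $c$ drives the prime-orbit density via a multidimensional Vinogradov argument, while the mean-zero perturbation $\tilde r$, tailored to the Liouville structure of $\alpha$, supplies the weak mixing. This is simpler than the approach of \cite{Kan} because density only requires control of prime orbits along \emph{some} subsequence of primes, not all of them.

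\textbf{Constant rescaling.} For $r\equiv c$ the reparametrization is the linear rescaling $T^{\alpha,c}_t=T^\alpha_{t/c}$, so $T^{\alpha,c}_p(x)=(x_1+p/c,\,x_2+p\alpha/c)\bmod\Z^2$. Choosing $c$ outside the countable set $\Q+\Q\alpha$ makes $(1/c,\alpha/c)$ satisfy the Diophantine condition for the multidimensional Vinogradov theorem on $\T^2$, so $\{T^{\alpha,c}_p(x)\}_{p\text{ prime}}$ is equidistributed for every $x\in\T^2$. This flow is linear, hence not weakly mixing; one must perturb.

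\textbf{Liouville $\alpha$ and the perturbation.} I take $\alpha$ Liouville with convergents $p_n/q_n$ satisfying $|\alpha-p_n/q_n|<q_n^{-n}$ (uncountably many choices), and set $r=c+\tilde r$, where $\tilde r\in C^\infty(\T^2)$ is a mean-zero lacunary Fourier series with blocks localized at the Liouville scales $q_n$, in the spirit of \cite{Kan}. The amplitudes are tuned so that $\tilde r$ is $C^\infty$ with arbitrarily small $C^0$ norm, while the Birkhoff integrals of $\tilde r$ along $T^\alpha$-orbits grow along certain times, obstructing the time change from being measurably conjugate to a rotation and forcing weak mixing. Writing $T^{\alpha,r}_t(x)=T^\alpha_{\sigma(t,x)}(x)$, the time change $\sigma$ inverts $u\mapsto cu+\int_0^u\tilde r(T^\alpha_v x)\,dv$, so $\sigma(p,x)=p/c-\tfrac{1}{c}\int_0^{p/c}\tilde r(T^\alpha_v x)\,dv+o(1)$.

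\textbf{Density persists; obstacle.} By a Denjoy--Koksma-type bound at the close-return times of $T^\alpha$, the Birkhoff integral $\int_0^u\tilde r(T^\alpha_v x)\,dv$ is uniformly small in $x$ when $u$ is near an integer multiple of a Liouville denominator $q_n$. Dirichlet's theorem on primes in arithmetic progressions produces primes $p$ with $p/c$ arbitrarily close to such multiples; along this subsequence, $T^{\alpha,r}_p(x)$ is $o(1)$-close to $T^{\alpha,c}_p(x)$, which equidistributes by the first step, yielding equidistribution of prime orbits along a subsequence and, a fortiori, density for every $x$. The principal obstacle I expect is the joint constraint on $\tilde r$: weak mixing demands large Birkhoff excursions along some times, while density demands small excursions along the primes used. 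Reconciling this by matching the lacunary frequency content of $\tilde r$ to the Liouville scales and to the arithmetic of primes modulo $q_n$, while keeping the Denjoy--Koksma bounds uniform in $x$, is the technical heart of the argument.
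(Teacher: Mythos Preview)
Your plan has a genuine gap in the density step, and it is not a technicality but a structural problem with the perturbative strategy.

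You propose to select a subsequence of primes $p$ for which $p/c$ is close to an integer multiple $Kq_n$ of a Liouville denominator, so that the Denjoy--Koksma bound kills the Birkhoff integral of $\tilde r$ and hence $T^{\alpha,r}_p(x)\approx T^{\alpha,c}_p(x)$. But along exactly this subsequence the \emph{unperturbed} orbit is useless: if $p/c\approx Kq_n$ then
\[
T^{\alpha,c}_p(x)=x+\tfrac{p}{c}(1,\alpha)\approx x+Kq_n(1,\alpha)\equiv (x_1,\,x_2+Kq_n\alpha)\approx(x_1,x_2+Kp_n)\equiv x \pmod{\Z^2},
\]
since $\|q_n\alpha\|$ is tiny for Liouville $\alpha$. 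So the primes on which you can control the perturbation are precisely the close-return times of the linear flow, and the points $T^{\alpha,c}_p(x)$ all cluster near $x$ rather than equidistribute. Vinogradov's theorem gives equidistribution of $T^{\alpha,c}_p(x)$ over \emph{all} primes, not over your sparse subsequence; invoking it here is illegitimate. (Separately, Dirichlet's theorem on primes in arithmetic progressions does not produce primes near a prescribed real number $cKq_n$; that is a prime-gap statement.) The tension you flag at the end is therefore fatal to this outline as written: the times where $\tilde r$ is tame and the times where the linear prime orbit spreads out are disjoint.

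The paper avoids this trap by a different mechanism. It does not try to compare $T^{\alpha,r}_p$ to a linear prime orbit at all. Instead it chooses $\alpha$ with \emph{prime} denominators $q_n$ and $q_{n+1}\ge c_0 e^{\delta q_n}$, takes Fayad's analytic weakly mixing roof (no hand-built $\tilde r$), and uses the close-return estimate $T_{kq_n}\approx\mathrm{id}$ not on a sparse set of primes but on \emph{every} prime $p<K_nq_n$: writing $p=k_pq_n+a$ one gets $T_p(x,s)\approx T_a(x,s)$. The residue $a$ now carries all the spatial information, and Siegel--Walfisz (applicable because $q_n\le(\log K_nq_n)^2$) says each residue class $a\bmod q_n$ receives asymptotically the same number of primes. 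Thus the full prime average collapses to the ordinary Birkhoff average $\tfrac{1}{q_n-1}\sum_{a<q_n}g(T_a(x,s))$, which converges by unique ergodicity. The point is that the ``offset'' $a$ sweeps out the whole orbit; your scheme throws this offset away.
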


As an immediate corollary, we obtain that the prime orbits of all points are dense:

\begin{corollary}
For all $x \in \T^2$, the set $\{ T^{\alpha,r}_p(x) \mid p \text{ is prime} \}$ is dense in $\T^2$.
\end{corollary}

As mentioned above, we prove a stronger statement: namely that there exists a sequence $N_k\to +\infty$ such that for  all $x \in \T^2$, the orbit $\{T^{\alpha,r}_p(x)\}_{p\leq N_k}$ is equidistributed with respect to $\mu_r$.
Our methods follow similar strategy to that in \cite{Kan} and \cite{KLR}, i.e. we show that there is a periodic structure of the orbits in their respective residue classes modulo sufficiently large primes, allowing us to approximate prime orbits by usual $\Z$ orbits and conclude by the use of unique ergodicity of the time $1$ map. We leverage more classical results from number theory (in particular the Siegel-Walfisz Theorem) than the ones used in \cite{Kan} (where the author uses the Bombieri-Vinogradov Theorem). On the dynamical side, the crucial part is to gain control of ergodic sums for the time change map $r$ along with fine approximation of the system by a periodic one.  Many of these estimates on the ergodic sums are in the same spirit as the Denjoy-Koksma inequality, and depend heavily on the imposed diophantine conditions on the irrational $\alpha$.

\section{Acknowledgements}
I owe Dr. Adam Kanigowski many thanks for his help in completing this project.
I would also like to thank the organizers of the Maryland Summer Scholars program for providing the funding and opportunity for this project to take place.

\section{Basic Definitions and Background}

\subsection{Notation}

Throughout the paper $(X,\mathscr{B},\mu)$ will denote a standard Borel probability space. Given a measure-preserving transformation $T: (X,\mathscr{B},\mu) \to (X,\mathscr{B},\mu)$ and a function $f: (X,\mathscr{B},\mu) \to \R^{+}$, we denote the ergodic sums by

$$
S_n(f)(x) = \sum_{j = 0}^{n-1} f(T^j x).
$$

Also, the notation $\Vert \cdot \Vert$ is used to denote the distance to the nearest integer throughout.

\subsection{Irrational Rotations of $\T$}

We will treat $\T$ as the interval $[0,1)$ with the points 0 and 1 identified. Let $\alpha$ be irrational. We define a transformation $T_{\alpha}: \T \to \T$ by $T_{\alpha}(x) = x + \alpha \pmod 1$. A classical fact is that these transformations are uniquely ergodic.

Every irrational $\alpha \in (0,1)$ has a unique continued fraction expansion, by which we mean there is a unique sequence of positive integers $(a_n)$ such that
$$
\alpha = \frac{1}{a_1 + \frac{1}{a_2 + \frac{1}{a_3 + ...}}}.
$$
Given an $\alpha$ with continued fraction expansion $(a_n)$, we define inductively the sequence of denominators $(q_n)$:
$$
q_0 = q_1 = 1, \text{ and } q_{n+1} = a_n \cdot q_n + q_{n-1}.
$$
In particular, there also exists a unique sequence of numerators $(p_n)$ (satisfying an analogous recurrence relation to the denominators) such that
$$
\frac{p_n}{q_n} = \frac{1}{a_1 + \frac{1}{a_2 + \frac{1}{... + \frac{1}{a_n}}}}.
$$
The primary important property of this sequence of rational approximations (which we will make use of repeatedly) is that for every $n \in \N$,
$$
\frac{1}{2q_n q_{n+1}} < \biggr\lvert \alpha - \frac{p_n}{q_n} \biggr\rvert < \frac{1}{q_n q_{n+1}},
$$
and this provides us with a very nice bound on the distance from $q_n \alpha$ to the nearest integer:
\begin{equation}\label{eq:norm}
    \frac{1}{2 q_{n+1}} < \lvert q_n \alpha - p_n \rvert < \frac{1}{q_{n+1}}.
\end{equation}
We recall that the $(p_n/q_n)$ are the best rational approximations to $\alpha$ in the following sense: if for some $r \leq q_n$ and $0 \leq s < r$ we have $\lvert \alpha - s/r \rvert \leq \lvert \alpha - p_n/q_n \rvert$, then $r = q_n$ and $s = p_n$.

\subsection{Special Flows}

Given a transformation $T: (X,\mathscr{B},\mu) \to (X,\mathscr{B},\mu)$ and an $L^1$ function $f: (X,\mathscr{B},\mu) \to \R^{+}$, consider the quotient space
$$
X_f = \{ (x,t) \in X \times \R^{+} \mid 0 \leq t \leq f(x) \} / \sim,
$$
where $\sim$ is the equivalence relation given by $(x, f(x)) \sim (T(x), 0)$. We define a flow $(T_t^f)$ on this space in the following manner: given a point $(x, s)$, flow vertically with unit speed until reaching the point $(x, f(x))$, then jump to $(Tx, 0)$ and flow until $(Tx, f(Tx))$, then jump to $(T^2 x, 0)$, and so on. Formally, it is defined by
$$
T_t^f (x, s) = (T^n x, s'),
$$
where $n, s'$ are uniquely defined and satisfy
$$
S_n(f)(x) + s' = t + s, \; \text{ and } \; 0 \leq s' \leq f(T^n x).
$$
Observe that $n, s'$ are unique since the function $f$ necessarily outputs positive numbers. 
These kinds of dynamical systems are known as \it special flows. \normalfont This is the kind of system we will consider. These flows preserve the product measure on the $\sigma$-Algebra generated by the given $\mathscr{B}$ and the Borel $\sigma$-Algebra on $\R$. It is clear that ergodic properties of $(T_t^f)$ depend heavily on ergodic properties of the underlying $T$; it is true, for example, that ergodicity of $T$ implies ergodicity of $(T_t^f)$.

\subsection{Smooth Flows on $\T^2$}

We define a flow $(T^{\alpha,r}_t)$ on the 2-Torus, $\T^2 = \T \times \T$ in the following manner: given an analytic function $r: \T^2 \to \R^{+}$ and an (irrational) angle $\alpha$, set $\frac{dy}{dt} = \alpha \cdot r(x,y)$ and $\frac{dx}{dt} = r(x,y)$. For a given point $(x,y) \in \T^2$, this essentially means we flow at a slope $\alpha$ from the horizontal, with speed scaled by our function $r$. As alluded to, this is simply a reparameterization of the linear flow in the direction $\alpha$.

This is a classical example of a smooth dynamical system, and we will study it by its special representation defined above for an appropriate $f$ (depending of course on $r$). For appropriate choices of $r, \alpha$, we will see that these systems are weakly mixing, and hence so are the special flows. This will be an important component of the proof of the main theorem.

It is also important to note that reparametrized flows on the 2-Torus are uniquely ergodic, meaning there is only one Borel probability measure $\nu$ on $\T^2$ such that the flow is ergodic.

If in addition the flow is weak mixing (which can be arranged, as we will see in section 3.6), this implies that the time-1 map $T_1^{\alpha, r}$ is uniquely ergodic: suppose $\lambda \neq \nu$ is an invariant measure for  $T_1^{\alpha, r}$, then for every $s \in [0,1]$ the measure $(T^{\alpha,r}_s)_* \lambda$ is also invariant for $T_1$, and the measure $\int_0^1 (T^{\alpha,r}_s)_* \lambda ds$ is $T_t^{\alpha, r}$ invariant. Therefore we must have $\nu = \int_0^1 (T^{\alpha,r}_s)_* \lambda ds$. Since $T^{\alpha,r}_t$ is weak mixing, $T_1^{\alpha, r}$ is ergodic, and so $(T^{\alpha,r}_s)_* \lambda = \nu$ for almost every $s \in [0,1]$. Therefore, we have the desired result, that $\nu = (T^{\alpha,r}_{-s})_* \nu = \lambda$, and so $T_1^{\alpha, r}$ is uniquely ergodic.

 Now since $T_1^{\alpha, r}$ is uniquely ergodic (whenever $T_t^{\alpha, r}$ is weak mixing), we get the following: for any $f \in C(\T^2)$, we have
$$
\frac{1}{n} S_n(f)(x) \to \int_{\T^2} f d\nu,
$$
uniformly in $x$ (where the underlying transformation is $T_1^{\alpha, r}$).

The key point for us is that for the reparameterized linear flow, every point's orbit is equidistributed. We will show that there is some structure to orbits along particular primes and then apply this result to sums along residue classes modulo these primes.

Our main result is stated in the language of smooth reparameterizations of linear flows on $\T^2$, but the proofs will be conducted in the special representation model. We will see in the following section that these two models are smoothly conjugated.

\subsection{Conjugacy Between the Flows}
For completeness, we recall the construction of the conjugacy between reparameterizations of smooth flows on $\T^2$ and their special representation. Let $(T^{\alpha,r}_t)$ be a smooth reparametrized flow on $\T^2$. The basic idea is to define the functional values for $f: \T \to \R^+$ for our suspension flow by integrating $r$ along flow lines in $\T^2$: Let $C_x$ be the path given by flowing from the point $(x,0) \in \T^2$ to the point $(x+\alpha, 1) \in \T^2$ in the linear flow given by $\alpha, r$. We then define $f: \T \to \R^+$ by $f(x) = \int_{C_x} r(x(t), y(t)) dt$, i.e. $f(x)$ will essentially be the total time required to flow from $(x,0)$ to $(x+\alpha, 1)$ in $\T^2$. Since we will be flowing with unit speed in the corresponding special flow, this is really just a way to again reparameterize the original flow on $\T^2$. There is then an immediate conjugacy between the two systems $(\T^2, T^{\alpha,r}_t)$ and $(\T_f, T_t^f)$.

\subsection{Weak Mixing}
This is a result of Fayad \cite{Fayad}: for any given irrational $\alpha$, there is an analytic function $r: \T^2 \to \R^+$ such that the corresponding linear flow on $\T^2$ is weak mixing. His result is actually stronger than this (and holds in higher dimensions as well), but this is all that will be necessary for us. It is also important to note that rescaling such $r$ by a constant will not change the fact that the linear flow on $\T^2$ is weak-mixing (this will allow us to assume later that the corresponding $f$ for the special flow has an integral of $1$).

\subsection{The Siegel-Walfisz and Prime Number Theorems}
These results from number theory will be key ingredients in the proof of the main theorem. Before stating the results, we introduce some notation. 

Let $\pi (x; q, a)$ denote the number of primes less than or equal to $x$ which are congruent to $a \pmod q$, and $\pi(x)$ be the prime counting function (giving the number of primes less than or equal to $x$). We denote Euler's totient function by $\phi$ (meaning that $\phi(n)$ is the number of positive integers $\leq n$ which are relatively prime to $n$), and use the notation $Li(x) = \int_2^x \frac{dt}{\log(t)}$ for the offset logarithmic integral. We also use the following asymptotic notation: $f(x) = O(g(x))$ if there is a constant $M$ such that $\lvert f(x) \rvert \leq M g(x)$ for all sufficiently large $x$, and $f(x) \sim g(x)$ if $\lim_{x \to \infty} \frac{f(x)}{g(x)} = 1$.

Now we are ready to state the results.

\begin{theorem}[Siegel-Walfisz]
Let $A \in \R$ be given, and assume $q \leq (\log x)^A$, $\gcd(q,a) = 1$. Then there exists $C_A$, depending only on $A$, such that
$$
\pi (x; q, a) = \frac{\text{Li}(x)}{\phi(q)} + O \left(x \cdot e^{\frac{-C_A}{2} (\log x)^{1/2}} \right).
$$
\end{theorem}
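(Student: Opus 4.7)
The plan is to pass from primes in arithmetic progressions to character sums via orthogonality of Dirichlet characters, and then estimate those character sums using the analytic properties of Dirichlet $L$-functions. This is the classical route, and I would work first with the von Mangoldt function $\Lambda$ (i.e., with $\psi(x;q,a) := \sum_{n \leq x,\, n \equiv a \!\!\!\pmod q} \Lambda(n)$) and only at the end convert to $\pi(x;q,a)$ by partial summation, which produces the $\text{Li}(x)$ main term.

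The first step is the orthogonality decomposition
$$\psi(x;q,a) \;=\; \frac{1}{\phi(q)} \sum_{\chi \bmod q} \bar\chi(a)\, \psi(x,\chi), \qquad \psi(x,\chi) \;=\; \sum_{n \leq x} \chi(n)\Lambda(n).$$
For the principal character $\chi_0$, $\psi(x,\chi_0)$ agrees with $\psi(x)$ up to contributions of primes dividing $q$, and the classical prime number theorem with the de la Vallée Poussin error term gives $\psi(x) = x + O(x e^{-c\sqrt{\log x}})$. This produces the main term $\text{Li}(x)/\phi(q)$ after partial summation. The heart of the proof is to show that for each \emph{non-principal} $\chi$ modulo $q$,
$$\psi(x,\chi) \;=\; O\!\left(x \, e^{-c\sqrt{\log x}}\right)$$
uniformly over $q \leq (\log x)^A$.

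To obtain this, I would apply Perron's formula to $-L'(s,\chi)/L(s,\chi)$, then shift the contour past the line $\Re s = 1$. Since $L(s,\chi)$ has no pole for non-principal $\chi$, the shifted integral picks up only residues at the non-trivial zeros $\rho$, giving the explicit formula $\psi(x,\chi) = -\sum_\rho x^\rho/\rho + (\text{lower order})$. Using the standard zero-free region $\sigma \geq 1 - c/\log(q(|t|+2))$ and a zero-counting estimate for $L(s,\chi)$, the contribution of all complex zeros is at most $x \exp\!\bigl(-c\sqrt{\log x}\bigr)$ as long as $q \leq (\log x)^A$.

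The main obstacle is the possible exceptional real (Siegel) zero of $L(s,\chi)$ for a real primitive character $\chi$, since the standard zero-free region does not rule it out. This is handled by Siegel's theorem: for every $\epsilon > 0$ there exists $C(\epsilon) > 0$ such that any real zero $\beta$ of such $L(s,\chi)$ satisfies $\beta \leq 1 - C(\epsilon) q^{-\epsilon}$. Choosing $\epsilon$ small in terms of $A$, the contribution $x^\beta/\beta$ of the exceptional zero is absorbed into the $x\exp(-c_A\sqrt{\log x})$ error, yielding the stated bound with constant $C_A$. The reason $C_A$ is ineffective is precisely that Siegel's theorem is proved by a non-constructive dichotomy (either there are no exceptional zeros at all, or one uses their existence to prove a bound for all other moduli), so no explicit value of $C(\epsilon)$ is available.
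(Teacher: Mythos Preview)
Your outline is the standard, correct route to Siegel--Walfisz: orthogonality of Dirichlet characters reduces to bounding $\psi(x,\chi)$ for non-principal $\chi$, the explicit formula and the classical zero-free region handle the complex zeros, and Siegel's theorem controls the possible exceptional real zero at the cost of an ineffective constant $C_A$. There is nothing to compare it against, however, because the paper does not prove this theorem. It is stated in the background section as a known result from analytic number theory and is simply invoked later (via its corollary on $\pi(x;q,a)$ for prime moduli $q < (\log x)^2$) in the proof of the main equidistribution theorem. So your proposal is not wrong, but it goes well beyond what the paper itself does for this statement.
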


We get the following prime number theorem for arithmetic progressions as a corollary to the above (by setting $A=2$):

\begin{corollary}\label{cor:SW}
Uniformly in all primes $q < (\log x)^2$ and $0 < a < q$ (with $\gcd(q,a)=1$),
$$
\pi(x; q, a) \sim \frac{Li(x)}{q-1}.
$$
\end{corollary}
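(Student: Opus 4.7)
The plan is to derive the statement directly from the Siegel-Walfisz theorem applied with $A = 2$, leveraging the simplifications available when the modulus is prime. First I would observe that for prime $q$ we have $\phi(q) = q - 1$, and for any $0 < a < q$ the coprimality hypothesis $\gcd(q,a) = 1$ is automatically satisfied. Thus Siegel-Walfisz immediately yields
$$
\pi(x; q, a) = \frac{Li(x)}{q-1} + O\!\left( x \cdot e^{-(C_2/2)(\log x)^{1/2}} \right)
$$
for all primes $q \leq (\log x)^2$ and all $0 < a < q$.

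The only remaining step is to verify that the error term is of smaller order than the main term, uniformly in $q$ and $a$. Since $Li(x) \sim x / \log x$ and $q < (\log x)^2$, the main term is of order at least $x / (\log x)^3$ (the worst case being $q$ near $(\log x)^2$). It therefore suffices to check that
$$
(\log x)^3 \cdot e^{-(C_2/2)(\log x)^{1/2}} \longrightarrow 0
$$
as $x \to \infty$, which holds because any polynomial in $\log x$ is dominated by $e^{c\sqrt{\log x}}$ for any $c > 0$ (substitute $y = \log x$ and observe $y^3 e^{-c \sqrt{y}} \to 0$).

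There is no serious obstacle here: the corollary is essentially a bookkeeping consequence of the hypotheses that $q$ is prime and $q < (\log x)^2$, combined with the uniform error term in Siegel-Walfisz. The one subtlety to flag is that the constant $C_2$ depends only on $A = 2$ and not on $(q,a)$, so the error bound is uniform over all admissible pairs, which is precisely what is needed for the $\sim$ asymptotic to hold uniformly in both variables.
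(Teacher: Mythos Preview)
Your proposal is correct and matches the paper's approach exactly: the paper simply says ``Setting $A = 2$ in the above theorem'' and states the corollary without further argument, and you have filled in precisely the routine details (namely $\phi(q)=q-1$ and $\gcd(q,a)=1$ for prime $q$, plus the uniform comparison of the error term against the main term) that this one-line justification leaves implicit.
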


Now setting $q=2$ and $a=1$ in the above Corollary, since $\pi(x;2,1) = \pi(x)$, we obtain the classical Prime Number Theorem.

\begin{theorem}[The Prime Number Theorem] Let $\pi(x)$ denote the prime counting function, and $Li(x)$ denote the offset logarithmic integral. Then
$$
\pi(x) \sim Li(x).
$$
\end{theorem}

\subsection{The Set of Irrationals}

It will be necessary to impose some Diophantine conditions on the irrational $\alpha$ used to define the flows described above. Let $c_o, \: \delta$ be positive constants. Define
$$
D = \{ \alpha \in \R \setminus \Q \mid \forall n \in \N, \: q_n \text{ is prime (or $1$), and } q_{n+1} \geq c_o e^{\delta q_n} \}.
$$
For simplicity, from now on we will assume that $\alpha \in D$. However, the methods applied in this paper may also be used if the above conditions hold only along a subsequence of the denominators $(q_n)$. 

The following lemma is standard, we provide a proof for completeness.

\begin{lemma}
The set $D$ is uncountable.
\end{lemma}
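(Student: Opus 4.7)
The plan is to build uncountably many elements of $D$ by constructing the continued fraction expansion $(a_n)$ inductively, showing that at each stage there are infinitely many valid choices of the next partial quotient, and then extracting an uncountable family via a branching tree of choices.

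For the inductive step, suppose we have already selected $a_1, \ldots, a_n$, producing denominators $q_0=q_1=1,\,q_2,\ldots,q_n$ (where for the first few indices the primality condition is vacuous or trivially handled). The standard identity $p_k q_{k-1} - p_{k-1} q_k = \pm 1$ guarantees $\gcd(q_n, q_{n-1}) = 1$ for every $n$. We must now pick $a_{n+1} \in \N$ so that
$$
q_{n+1} \;=\; a_{n+1}\, q_n + q_{n-1}
$$
is simultaneously prime and $\geq c_o e^{\delta q_n}$. Since $\gcd(q_n, q_{n-1}) = 1$, Dirichlet's theorem on primes in arithmetic progressions ensures that the residue class $q_{n-1} \pmod{q_n}$ contains infinitely many primes. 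In particular, there exist infinitely many primes $p$ in this class with $p \geq c_o e^{\delta q_n}$, and each such $p$ determines a valid $a_{n+1} = (p - q_{n-1})/q_n$.

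To obtain uncountably many elements of $D$, I would select, at every inductive stage past the initial ones, two distinct valid values of $a_{n+1}$ (possible because there are infinitely many primes to draw from). This yields, for each binary sequence $\varepsilon \in \{0,1\}^{\N}$, a distinct sequence of partial quotients and hence a distinct irrational $\alpha_\varepsilon \in D$ (distinct continued fraction expansions give distinct irrationals). Since $\{0,1\}^{\N}$ has cardinality $2^{\aleph_0}$, this shows $|D| \geq 2^{\aleph_0}$, proving uncountability.

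I do not anticipate a real obstacle: the only arithmetic input is Dirichlet's theorem, and the only dynamical input is the automatic coprimality $\gcd(q_n, q_{n-1}) = 1$ from the continued fraction recurrence. The mildly delicate point is simply to verify that one is always free to enforce the exponential lower bound $q_{n+1} \geq c_o e^{\delta q_n}$ simultaneously with primality, but this follows immediately from the fact that Dirichlet's theorem provides arbitrarily large primes in the required arithmetic progression.
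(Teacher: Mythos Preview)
Your argument is correct and is essentially identical to the paper's own proof: both use $\gcd(q_n,q_{n-1})=1$, invoke Dirichlet's theorem to get infinitely many primes in the progression $m q_n + q_{n-1}$ exceeding $c_o e^{\delta q_n}$, and then make a binary choice at each stage to inject $\{0,1\}^{\N}$ into $D$. The paper just makes the binary branching explicit by always taking the smallest or second-smallest admissible $a_n$ according to the bit $x_n$.
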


\begin{proof}
We will form an injection from the set $\{0,1\}^\N$, which is uncountable, to the set of $D$'s continued fractions. Let $x = (x_1, x_2, ...) \in \{0,1\}^\N$. Set $q_0 = q_1 = 1$. Note that by Dirichlet's Theorem on Arithmetic progressions, the set $\{ m \cdot q_1 + q_0 \}_{m \in \N}$ contains infinitely many primes (since $\gcd(q_1, q_0) = 1$), and thus primes which are arbitrarily large. We can now define $q_2$: if $x_1 = 0$, let $a_1$ be the smallest natural number such that $q_2 = a_1 \cdot q_1 + q_0$ is prime and larger than $c_o e^{\delta q_1}$; if $x_1 = 1$, let $a_1$ be the \it second \normalfont smallest natural number so that $q_2$ has these properties. We now have that $\gcd(q_2, q_1) = 1$, so we can apply Dirichlet's Theorem again to obtain $q_3$ in an analogous manner (dependent on if $x_2 = 0 \text{ or } 1$).

It is easy to prove by induction that at each step this construction yields numbers $q_n, q_{n-1}$ for which $\gcd(q_n, q_{n-1}) = 1$, so that by Dirichlet's Theorem, the sequence $\{ m \cdot q_n + q_{n-1} \}_{m \in \N}$ always contains primes that are arbitrarily large. Thus in general, at the $n^{\text{th}}$ iterate, if $x_n = 0$, $a_n$ will be the smallest natural number such that $q_{n+1} = a_n \cdot q_n + q_{n-1}$ is prime and larger than $c_o e^{\delta q_n}$, and if $x_n = 1$, it will be the second smallest natural number with these properties. So for any given $x \in \{0,1\}^\N$, we can associate a unique infinite sequence $(a_n)$ of natural numbers, which is uniquely associated to some irrational $\alpha \in D$ by the continued fraction expansion. Hence the set $D$ is uncountable.

\end{proof}

\section{Ergodic Averages of Analytic Cocycles over $\T$}

From this point on, we will denote the special flow on $\T_f$ by $(T_t)$ since there will be no ambiguity regarding the underlying function $f$ determining the space. We will denote the characters of $\T$ by

$$
\chi_k (x) = e^{2 \pi i k x}.
$$
Any function $f \in C(\T)$ can be written as a linear combination of characters
\begin{equation}\label{eq:sum}
f = \sum_{k \in \Z} a_k \chi_k ,
\end{equation}
(where the $(a_k)_{k \in \Z}$ are constants) and further $f$ is a \it uniform \normalfont limit of these sums.

Recall that by the formula for finite geometric sums we have

\begin{equation}\label{eq:geomsum}
S_{q_n}(\chi_k)(x)= e^{2 \pi i k x} \cdot \frac{1-e^{2 \pi i k q_n \alpha}}{1-e^{2 \pi i k \alpha}}.
\end{equation}

We first establish some natural bounds on the size of the ergodic sums $S_{q_n}(f)(x)$.

\begin{lemma}\label{lem:form}
For every $k \in \Z$, we have that
$$
\lvert S_{q_n}(\chi_k)(x) \rvert \leq \text{min}\left(q_n, \, \frac{4 \pi \lvert k \rvert}{\Vert k \alpha \Vert \cdot q_{n+1}} \right).
$$
\end{lemma}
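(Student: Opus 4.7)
The plan is to establish the two bounds inside the minimum separately, then combine them. The first bound, $|S_{q_n}(\chi_k)(x)| \leq q_n$, is immediate from the triangle inequality, since $|\chi_k(y)| = 1$ for every $y$ and so each of the $q_n$ summands contributes at most $1$ in modulus.

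For the second bound I would start directly from the closed-form expression (\ref{eq:geomsum}). Since $|e^{2\pi i k x}| = 1$, one has
$$
|S_{q_n}(\chi_k)(x)| \;=\; \frac{\bigl|1 - e^{2\pi i k q_n \alpha}\bigr|}{\bigl|1 - e^{2\pi i k \alpha}\bigr|} \;=\; \frac{|\sin(\pi k q_n \alpha)|}{|\sin(\pi k \alpha)|},
$$
using the identity $|1 - e^{2\pi i \theta}| = 2|\sin(\pi\theta)|$. Now I would bound the numerator from above and the denominator from below by converting sines to the nearest-integer distance $\|\cdot\|$.

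For the numerator, apply the elementary estimate $|\sin(\pi\theta)| \leq \pi \|\theta\|$ together with the submultiplicative property $\|k\theta\| \leq |k|\,\|\theta\|$ (valid for $k \in \Z$) and the basic approximation inequality (\ref{eq:norm}), namely $\|q_n \alpha\| < 1/q_{n+1}$. This gives
$$
|\sin(\pi k q_n \alpha)| \;\leq\; \pi \|k q_n \alpha\| \;\leq\; \pi |k| \|q_n \alpha\| \;<\; \frac{\pi |k|}{q_{n+1}}.
$$
For the denominator, use the complementary elementary inequality $|\sin(\pi\theta)| \geq 2 \|\theta\|$, which follows from the concavity of sine on $[0,\pi/2]$; this yields $|\sin(\pi k \alpha)| \geq 2\|k\alpha\|$.

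Dividing the two estimates produces $|S_{q_n}(\chi_k)(x)| \leq \pi |k|/(2\|k\alpha\| q_{n+1})$, which is in particular dominated by the stated $4\pi |k|/(\|k\alpha\| q_{n+1})$. Taking the minimum with the trivial bound $q_n$ finishes the proof. There is no real obstacle here; the only point worth care is keeping the two sine inequalities oriented in the correct directions and remembering that $\|k q_n \alpha\|$ (not $\|k\alpha\|$) is what the numerator controls, which is precisely why the factor $|k|$ appears in the final bound.
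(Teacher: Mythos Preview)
Your proof is correct and follows essentially the same strategy as the paper: both obtain the trivial bound $q_n$ from the triangle inequality, then use the closed geometric-sum formula \eqref{eq:geomsum} and bound numerator and denominator separately via $\|q_n\alpha\|<1/q_{n+1}$ and $\|k\alpha\|$. The only difference is cosmetic---you pass through the identity $|1-e^{2\pi i\theta}|=2|\sin(\pi\theta)|$ and invoke the standard inequalities $|\sin(\pi\theta)|\le \pi\|\theta\|$ and $|\sin(\pi\theta)|\ge 2\|\theta\|$, whereas the paper uses a Lipschitz estimate for the numerator and a Taylor-expansion argument for the denominator; your route is slightly cleaner and in fact yields the sharper constant $\pi/2$ in place of $4\pi$.
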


\begin{proof}
First, observe that
$$
\lvert S_{q_n}(\chi_k)(x) \rvert = \biggr\lvert \sum_{j=0}^{q_n - 1} e^{2 \pi i k (x+j\alpha)} \biggr\rvert \leq \sum_{j=0}^{q_n - 1} \lvert e^{2 \pi i k (x+j\alpha)} \rvert = \sum_{j=0}^{q_n- 1} 1 = q_n,
$$
by the triangle inequality. Thus it just remains to show that
$$
\lvert S_{q_n}(\chi_k)(x) \rvert \leq \frac{4 \pi \lvert k \rvert}{\Vert k \alpha \Vert \cdot q_{n+1}}.
$$
By \eqref{eq:geomsum}, we have that
$$
\lvert S_{q_n}(\chi_k)(x) \rvert = \frac{\lvert 1-e^{2 \pi i k q_n \alpha}\rvert}{\lvert 1-e^{2 \pi i k \alpha}\rvert}.
$$
But now observe that $\chi_k \in Lip(\mathbb{T})$: let $a \leq b$,
$$
\lvert e^{ib} - e^{ia} \rvert = \biggr\lvert \int_a^b ie^{it} dt \biggr\rvert \leq \int_a^b \lvert ie^{it} \rvert dt = \int_a^b 1 dt = b - a,
$$
hence we have that
$$
\lvert e^{2 \pi i k x} - e^{2 \pi i k y} \rvert \leq \lvert 2 \pi k x - 2 \pi k y \rvert = 2 \pi \lvert k \rvert \cdot \lvert x - y \rvert,
$$
which is precisely the statement that $\chi_k$ is Lipschitz with Lipschitz constant $2 \pi \lvert k \rvert$.
Now, applying \eqref{eq:norm}, we have
$$
\lvert e^{2 \pi i k (q_n \alpha)} - 1 \rvert = \lvert e^{2 \pi i k (q_n \alpha)} - e^{2 \pi i k (p_n)} \rvert \leq 2 \pi \lvert k \rvert \cdot \lvert q_n \alpha - p_n \rvert \leq \frac{2 \pi \lvert k \rvert}{q_{n+1}}.
$$
Hence,
\begin{equation}\label{eq:hence}
\lvert S_{q_n}(\chi_k)(x) \rvert \leq \frac{1}{\lvert 1-e^{2 \pi i k \alpha}\rvert} \cdot \frac{2 \pi \lvert k \rvert}{q_{n+1}}.
\end{equation}
Note that the Taylor series of $e^x$ is $1 + x + \frac{x^2}{2!} + ...$ . Let $m_k$ denote the integer that is closest to $k\alpha$. Then
\begin{align*}
\lvert 1 - e^{2 \pi i k \alpha} \rvert &= \lvert 1 - e^{2 \pi i \lvert k \alpha - m_k\rvert } \rvert \\
&= \biggr\lvert 1 - (1 + 2 \pi i \lvert k \alpha - m_k \rvert + \frac{[2 \pi i \lvert k \alpha - m_k \rvert]^2}{2!} + ...) \biggr\rvert \geq \frac{\lvert k \alpha - m_k \rvert}{2} = \frac{\Vert k \alpha \Vert}{2},
\end{align*}
and so by \eqref{eq:hence}, we have our desired bound
$$
\lvert S_{q_n}(\chi_k)(x) \rvert \leq \frac{1}{\Vert k \alpha \Vert } \cdot \frac{4 \pi \lvert k \rvert}{q_{n+1}}.
$$
\end{proof}

Now, recall that we can decompose $f$ as a given in \eqref{eq:sum}, and hence

\begin{equation}\label{eq:birkhoffsum}
    S_{q_n}(f)(x) = \sum_{k = -\infty}^{\infty} a_k \cdot S_{q_n}(\chi_k)(x),
\end{equation}

by additivity of Birkhoff sums.

\begin{corollary}
For any $f \in C(\T)$,
$$
\lvert S_{q_n}(f)(x) \rvert \leq \sum_{k = -\infty}^{\infty} a_k \cdot \text{min}\left(q_n, \, \frac{4 \pi \lvert k \rvert}{\Vert k \alpha \Vert \cdot q_{n+1}} \right).
$$
\end{corollary}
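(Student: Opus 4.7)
The corollary is essentially an immediate consequence of Lemma~\ref{lem:form} combined with the Fourier expansion \eqref{eq:birkhoffsum}. The plan is to start from
$$
S_{q_n}(f)(x) = \sum_{k=-\infty}^\infty a_k \cdot S_{q_n}(\chi_k)(x),
$$
which is justified by the uniform convergence of the Fourier series of $f$ (so that the finite Birkhoff sum commutes with the infinite sum over $k$). Then I would apply the triangle inequality to pull the absolute value inside the sum, obtaining
$$
\lvert S_{q_n}(f)(x) \rvert \leq \sum_{k=-\infty}^\infty \lvert a_k \rvert \cdot \lvert S_{q_n}(\chi_k)(x) \rvert,
$$
and finally insert the bound from Lemma~\ref{lem:form} for each term $\lvert S_{q_n}(\chi_k)(x) \rvert$.

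There is no real obstacle here; the only minor technical point is to justify the interchange of summation and the Birkhoff sum, but since the convergence in \eqref{eq:sum} is uniform and the Birkhoff sum $S_{q_n}$ is a finite linear operation (a sum of $q_n$ evaluations), this interchange is automatic. I would also flag that, strictly speaking, the bound should feature $\lvert a_k \rvert$ rather than $a_k$ on the right-hand side, since the $a_k$ are complex Fourier coefficients; this is simply the statement as it should be read and follows directly from the triangle inequality step above.
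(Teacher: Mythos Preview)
Your proposal is correct and follows exactly the paper's approach: the paper's proof is the single line ``Apply lemma~\ref{lem:form} and \eqref{eq:birkhoffsum} (making use of the triangle inequality),'' which is precisely what you describe. Your additional remarks about uniform convergence justifying the interchange and about $\lvert a_k \rvert$ versus $a_k$ are valid clarifications that the paper omits.
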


\begin{proof}
Apply Lemma \ref{lem:form} and \eqref{eq:birkhoffsum} (making use of the triangle inequality).
\end{proof}

\begin{lemma}\label{lem:bound}
If $f$ is analytic, then
$$
\lvert S_{q_n}(f)(x) \rvert \leq C\sum_{k = -\infty}^{\infty} e^{-c \lvert k \rvert} \cdot \text{min}\left(q_n, \, \frac{4 \pi}{\Vert k \alpha \Vert \cdot q_{n+1}} \right),
$$
for some positive constants C, c.
\end{lemma}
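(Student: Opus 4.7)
The plan is to bootstrap from the preceding Corollary, which already gives
$$
\lvert S_{q_n}(f)(x) \rvert \leq \sum_{k \in \Z} \lvert a_k\rvert \cdot \min\!\left(q_n,\ \frac{4\pi \lvert k\rvert}{\Vert k\alpha\Vert\, q_{n+1}}\right).
$$
So the task reduces to two things: controlling the Fourier coefficients $a_k$, and dispensing with the extra factor of $\lvert k\rvert$ appearing inside the minimum.

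For the first, I would invoke the standard fact that if $f: \T \to \R$ is analytic, then it extends holomorphically to some strip $\{\lvert \operatorname{Im} z\rvert < \eta\}$, and a contour shift in the defining integral $a_k = \int_\T f(x) e^{-2\pi i k x}\,dx$ yields an exponential decay $\lvert a_k\rvert \leq C_1 e^{-c_1 \lvert k\rvert}$ for some $C_1, c_1 > 0$ depending on the width of the strip and the sup norm of $f$ there. This is a classical estimate, so I would just cite it.

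For the second, I would compare the two minima term by term. If $\min(q_n, 4\pi/(\Vert k\alpha\Vert q_{n+1})) = q_n$, then the same branch is attained in the original minimum, and the bound is already $\lvert a_k\rvert q_n \leq C_1 e^{-c_1\lvert k\rvert} q_n$. If instead $\min(q_n, 4\pi/(\Vert k\alpha\Vert q_{n+1})) = 4\pi/(\Vert k\alpha\Vert q_{n+1})$, then the original minimum is at most $4\pi \lvert k\rvert/(\Vert k\alpha\Vert q_{n+1})$, i.e.\ at most $\lvert k\rvert$ times the new branch. Thus in either case
$$
\lvert a_k\rvert \cdot \min\!\left(q_n,\ \frac{4\pi \lvert k\rvert}{\Vert k\alpha\Vert\, q_{n+1}}\right) \leq C_1 \lvert k\rvert e^{-c_1 \lvert k\rvert} \cdot \min\!\left(q_n,\ \frac{4\pi}{\Vert k\alpha\Vert\, q_{n+1}}\right),
$$
and the extra $\lvert k\rvert$ is absorbed into the exponential by the elementary inequality $\lvert k\rvert e^{-c_1\lvert k\rvert} \leq C_2 e^{-c\lvert k\rvert}$ for any $0 < c < c_1$, where $C_2 = \sup_{k\in\Z}\lvert k\rvert e^{-(c_1 - c)\lvert k\rvert} < \infty$. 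Summing over $k \in \Z$ yields the stated bound with $C = C_1 C_2$.

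There is no real obstacle here: the proof is essentially a bookkeeping exercise combining the previous Corollary with exponential decay of Fourier coefficients. The only step that requires any care is the observation that replacing $4\pi\lvert k\rvert/(\Vert k\alpha\Vert q_{n+1})$ by $4\pi/(\Vert k\alpha\Vert q_{n+1})$ inside the minimum costs at most a factor of $\lvert k\rvert$, but never loses anything when the $q_n$ branch is active — a case analysis that is nearly automatic once written down.
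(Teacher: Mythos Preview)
Your proof is correct and follows essentially the same approach as the paper: invoke exponential decay of Fourier coefficients for analytic functions, then absorb the stray factor of $\lvert k\rvert$ into the exponential via $\lvert k\rvert e^{-c_1\lvert k\rvert}\le C_2 e^{-c\lvert k\rvert}$ for $c<c_1$. The paper's version simply asserts this last inequality and leaves the passage from $\min(q_n,4\pi\lvert k\rvert/(\Vert k\alpha\Vert q_{n+1}))$ to $\lvert k\rvert\cdot\min(q_n,4\pi/(\Vert k\alpha\Vert q_{n+1}))$ implicit, whereas your case analysis makes that step explicit; the $k=0$ term (where your combined bound with the $\lvert k\rvert$ factor would vanish) is handled directly by the first case and absorbed into the constant $C$.
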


\begin{proof}
For analytic functions on the torus, the Fourier coefficients obey the following bound: $\lvert a_k \rvert \leq C' e^{-c' \lvert k \rvert}$ for some positive constants $C', c'$. Thus it only remains to show that for some other constants $C, c$, $e^{-c' \lvert k \rvert} \cdot \lvert k \rvert \leq Ce^{-c \lvert k \rvert}$, but now observe that
$$
e^{-c' \lvert k \rvert} \cdot \lvert k \rvert \leq Ce^{-c \lvert k \rvert} \; \Leftrightarrow \; \lvert k \rvert \leq Ce^{(c' - c)\lvert k \rvert},
$$
which is clearly satisfied by large enough $C$ as long as $c<c'$.
\end{proof}

Now, using the previous estimates, we move to establish bounds on the quantities $\lvert S_{Kq_n}(f)(x) - Kq_n \int_\mathbb{T} f d\mu\rvert$ in the spirit of the Denjoy-Koksma inequality. The purpose of this is that later we will assume $\int_\T f d\mu = 1$, so that we have bounds on the distance of these ergodic sums to integer multiples of the $q_n$.

\begin{lemma}\label{lem:sum-int}
Suppose $f$ is analytic. For sufficiently large n and some constants $C', c'$,
$$
\biggr\lvert S_{q_n}(f)(x) - q_n \int_\mathbb{T} f d\mu \biggr\rvert \leq C' e^{-c' q_n}.
$$
\end{lemma}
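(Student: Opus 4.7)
The plan is to subtract out the zeroth Fourier mode of $f$ and estimate the remainder using the exponential decay of the Fourier coefficients combined with the Diophantine growth condition on the denominators of $\alpha$.

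First, observe that $\int_\T f\,d\mu = a_0$ and $S_{q_n}(\chi_0)(x) = q_n$, so using the Fourier expansion \eqref{eq:birkhoffsum} gives
$$
S_{q_n}(f)(x) - q_n \int_\T f\,d\mu \;=\; \sum_{k \neq 0} a_k\, S_{q_n}(\chi_k)(x).
$$
Applying Lemma \ref{lem:form} and the decay $|a_k| \leq C_0 e^{-c_0|k|}$ of Fourier coefficients of analytic functions (exactly as in the proof of Lemma \ref{lem:bound}) yields, for suitable constants $C, c > 0$,
$$
\left| S_{q_n}(f)(x) - q_n \int_\T f\,d\mu \right| \;\leq\; C \sum_{k \neq 0} e^{-c|k|} \min\!\left(q_n,\; \frac{4\pi}{\|k\alpha\|\, q_{n+1}}\right).
$$

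I would then split this sum at $|k| = q_n$. For the tail $|k| \geq q_n$, the trivial bound $\min(\cdot) \leq q_n$ combined with summation of the geometric series $\sum_{|k|\geq q_n} e^{-c|k|}$ produces a contribution of order $q_n e^{-c q_n}$. For the head $1 \leq |k| < q_n$, I would invoke the best approximation property at level $n-1$: any such $k$ satisfies $\|k\alpha\| \geq \|q_{n-1}\alpha\| > \tfrac{1}{2q_n}$ by \eqref{eq:norm} applied to $n-1$, and hence $\frac{4\pi}{\|k\alpha\| q_{n+1}} \leq \frac{8\pi q_n}{q_{n+1}}$. Summing this against $e^{-c|k|}$ gives a contribution of order $q_n/q_{n+1}$, which by the Diophantine condition $q_{n+1} \geq c_o e^{\delta q_n}$ (from the definition of $D$) becomes a contribution of order $q_n e^{-\delta q_n}$.

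Combining, the total is bounded by $C_1 q_n e^{-\gamma q_n}$ with $\gamma = \min(\delta, c) > 0$. For sufficiently large $n$ the extra polynomial factor is absorbed, since $q_n e^{-\gamma q_n} \leq C' e^{-c' q_n}$ for any $c' \in (0, \gamma)$, yielding the stated estimate.

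The main subtle point is the choice of splitting threshold $|k| = q_n$. This is the unique choice that simultaneously activates both the Diophantine growth of $q_{n+1}$ (exploited in the head through $q_n/q_{n+1}$) and the best-approximation lower bound at level $n-1$ (used to estimate $\|k\alpha\|$ there). Splitting at $|k| = q_{n+1}$ would provide only the weaker estimate $\|k\alpha\| \geq \|q_n\alpha\| > \tfrac{1}{2q_{n+1}}$, turning the head bound into $O(1)$ per term, which is not enough; splitting at a threshold much smaller than $q_n$ would leave too much mass in the tail $\sum e^{-c|k|} q_n$ and produce decay only in the splitting parameter rather than in $q_n$.
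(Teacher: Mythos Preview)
Your proof is correct and follows essentially the same approach as the paper: subtract the zeroth Fourier mode, apply the bound from Lemma~\ref{lem:bound}, split the sum at $|k|=q_n$, use the best-approximation inequality $\|k\alpha\|\geq \|q_{n-1}\alpha\|>\tfrac{1}{2q_n}$ on the head and the trivial bound $q_n$ on the tail, invoke the Diophantine condition $q_{n+1}\geq c_o e^{\delta q_n}$, and absorb the polynomial factor into the exponential. Your closing discussion of the splitting threshold is a nice heuristic, though ``unique'' is a slight overstatement---any threshold comparable to $q_n$ would serve equally well.
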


\begin{proof}
First note that for every $x\in \T$, $S_{q_n}(\chi_0)(x)=q_n \int_\mathbb{T} f d\mu$, so, by \eqref{eq:birkhoffsum} it suffices to show that 
$$
\biggr \lvert \sum_{k\neq 0}a_kS_{q_n}(\chi_k)(x) \biggr \rvert \leq C' e^{-c'q_n},
$$
for some constant $c'>0$. By Lemma \ref{lem:bound}, we have
$$
\biggr \lvert \sum_{k\neq 0}a_kS_{q_n}(\chi_k)(x) \biggr \rvert \leq \sum_{0 < |k| < q_n} \frac{4 \pi e^{-c \lvert k \rvert}}{\Vert k \alpha \Vert q_{n+1}} + \sum_{|k| \geq q_n} e^{-c \lvert k \rvert} q_n,
$$
and now using \eqref{eq:norm} and that $(p_n/q_n)$ is the sequence of best approximations of $\alpha$,
$$
\frac{1}{2q_n} \leq \Vert q_{n-1} \alpha \Vert \leq \Vert k \alpha \Vert,
$$
so we have that
$$
\biggr \lvert \sum_{k\neq 0}a_kS_{q_n}(\chi_k)(x) \biggr \rvert \leq \sum_{0 < |k| < q_n} \frac{4 \pi e^{-c \lvert k \rvert}}{\frac{1}{2q_n} q_{n+1}} + q_n \sum_{|k| \geq q_n} e^{-c \lvert k \rvert} = \frac{8 \pi q_n}{q_{n+1}} \sum_{0 < |k| < q_n}  e^{-c \lvert k \rvert} + q_n \sum_{|k| \geq q_n} e^{-c \lvert k \rvert}.
$$
So by the formula for summing geometric series, we have
$$
\biggr \lvert \sum_{k\neq 0}a_kS_{q_n}(\chi_k)(x) \biggr \rvert \leq \frac{16 \pi q_n}{q_{n+1}} \left( \frac{1 - e^{-c q_n}}{1 - e^{-c}} \right) + 2 q_n \frac{e^{-c q_n}}{1 - e^{-c}}.
$$
Now, since $q_{n+1} \geq c_o e^{\delta q_n}$, we have
\begin{align*}
\biggr \lvert \sum_{k\neq 0}a_kS_{q_n}(\chi_k)(x) \biggr \rvert &\leq \frac{16 \pi}{c_o (1 - e^{-c})} q_n e^{-\delta q_n} - \frac{16 \pi}{c_o (1 - e^{-c})} q_n e^{(-c-\delta)q_n} + \frac{2}{1 - e^{-c}} q_n e^{-cq_n} \\ 
&\leq M q_n e^{-d q_n},
\end{align*}
for some positive constants $M, d$, hence
$$
\biggr \lvert \sum_{k\neq 0}a_kS_{q_n}(\chi_k)(x) \biggr \rvert \leq C' e^{-c'q_n},
$$
for some positive constants $C', c'$, which concludes the proof.
\end{proof}

\begin{lemma}\label{lem:uniform}
Let $f$ be analytic. Then for all $d \in (0,c')$,
$$
\sup_{0 \leq K \leq e^{dq_n}} \; \sup_{x\in\mathbb{T}} \; \biggr\lvert S_{Kq_n}(f)(x) - Kq_n \int_\mathbb{T} f d\mu \biggr\rvert \to 0,
$$
as $n\to +\infty$.
\end{lemma}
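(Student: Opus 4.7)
The plan is to bootstrap Lemma \ref{lem:sum-int} via the cocycle identity for Birkhoff sums. Specifically, for any $K \in \N$ and $x \in \T$,
$$
S_{Kq_n}(f)(x) = \sum_{j=0}^{K-1} S_{q_n}(f)(T_\alpha^{j q_n} x) = \sum_{j=0}^{K-1} S_{q_n}(f)(x + j q_n \alpha),
$$
so subtracting $K q_n \int_\T f \, d\mu = \sum_{j=0}^{K-1} q_n \int_\T f \, d\mu$ and applying the triangle inequality gives
$$
\biggr\lvert S_{K q_n}(f)(x) - K q_n \int_\T f \, d\mu \biggr\rvert \leq \sum_{j=0}^{K-1} \biggr\lvert S_{q_n}(f)(x + j q_n \alpha) - q_n \int_\T f \, d\mu \biggr\rvert.
$$

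Next I would invoke Lemma \ref{lem:sum-int} termwise. Since the bound $C' e^{-c' q_n}$ from that lemma holds uniformly in the base point (it does not depend on $x$), each of the $K$ summands is bounded by $C' e^{-c' q_n}$. Hence
$$
\sup_{x \in \T} \biggr\lvert S_{K q_n}(f)(x) - K q_n \int_\T f \, d\mu \biggr\rvert \leq K \cdot C' e^{-c' q_n}.
$$

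Finally, I would choose the constant $d$ in the statement to be any positive number strictly less than $c'$ (say $d = c'/2$). Then for every $K$ with $0 \leq K \leq e^{d q_n}$, the preceding bound gives
$$
\sup_{0 \leq K \leq e^{d q_n}} \sup_{x \in \T} \biggr\lvert S_{K q_n}(f)(x) - K q_n \int_\T f \, d\mu \biggr\rvert \leq C' e^{d q_n} \cdot e^{-c' q_n} = C' e^{-(c'-d) q_n},
$$
which tends to $0$ as $n \to \infty$. There is no real obstacle here beyond bookkeeping: all the analytic work was done in Lemma \ref{lem:sum-int}, and what matters is that its bound is uniform in $x$, so the cocycle decomposition converts a single-step exponential decay into a multi-step estimate that still beats any subexponential growth in $K$. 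The only care needed is to fix $d$ strictly below the exponent $c'$ produced by Lemma \ref{lem:sum-int}, which in turn depends only on the Diophantine constants $c_o, \delta$ and on the analytic decay rate of the Fourier coefficients of $f$.
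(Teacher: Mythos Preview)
Your proof is correct and follows essentially the same route as the paper: decompose $S_{Kq_n}(f)$ via the cocycle identity into $K$ blocks of length $q_n$, apply Lemma~\ref{lem:sum-int} uniformly in the base point to each block, and then choose $d<c'$ so that $K\cdot C'e^{-c'q_n}\le C'e^{-(c'-d)q_n}\to 0$. The only cosmetic difference is that you keep the constant $C'$ explicit throughout, whereas the paper absorbs it.
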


\begin{proof}
First, by the cocycle identity, we have
$$
S_{Kq_n}(f)(x) = \sum_{j=0}^{K-1} S_{q_n}(f)(T^{j q_n} x),
$$
and so we have
\begin{align*}
\biggr\lvert S_{Kq_n}(f)(x) - Kq_n \int_\mathbb{T} f d\mu \biggr\rvert &= \biggr\lvert \sum_{j=0}^{K-1} \left( S_{q_n}(f)(T^{j q_n} x) - q_n \int_\mathbb{T} f d\mu \right) \biggr\rvert \\
&\leq \sum_{j=0}^{K-1} \biggr\lvert S_{q_n}(f)(T^{j q_n} x) - q_n \int_\mathbb{T} f d\mu \biggr\rvert.
\end{align*}
Thus by Lemma \ref{lem:sum-int}, for any $x \in \mathbb{T}$,
$$
\biggr\lvert S_{Kq_n}(f)(x) - Kq_n \int_\mathbb{T} f d\mu \biggr\rvert \leq \sum_{j=0}^{K-1} e^{-c' q_n} = Ke^{-c' q_n},
$$
so let $d < c'$ be given, and assume $0 \leq K \leq e^{d q_n}$, then we have
$$
\biggr\lvert S_{Kq_n}(f)(x) - Kq_n \int_\mathbb{T} f d\mu \biggr\rvert \leq e^{(d - c') q_n}.
$$
We then have that
$$
\sup_{0 \leq K \leq e^{dq_n}} \; \sup_{x\in\mathbb{T}} \; \biggr\lvert S_{Kq_n}(f)(x) - Kq_n \int_\mathbb{T} f d\mu \biggr\rvert \leq e^{(d - c') q_n},
$$
and so taking the limit as $n \to +\infty$, we have the result.
\end{proof}

\section{Proof of the Main Theorem}

Let $m$ denote the metric induced on $\T_f$ realized as a subset of $\R^2$. Recall also the relevance of the constant $\delta$, as used in the definition of the set $D$, and the constant $c'$, from the statements of Lemmas \ref{lem:sum-int} and \ref{lem:uniform}.

\begin{lemma}\label{lem:close}
Suppose $f$ is analytic. Then for all $d < \min (\delta, c')$, we have
$$
\sup_{0 \leq K \leq e^{dq_n}} \; \sup_{x\in\mathbb{T}} \; m(T_{S_{K q_n}(f)(x)} (x, s), (x,s)) \to 0,
$$
as $n \to +\infty$.
\end{lemma}

\begin{proof}
Let $(x,s)$ be given such that $0 < s < f(x)$, and let $0 \leq K \leq e^{d q_n}$. Let $n$ be large enough that
\begin{equation}\label{eq:height}
\forall x' \in B_{e^{(d - \delta)q_n}}(x), \, s < f(x').
\end{equation}
Recall that the definition of our flow $(T_t)$ is the following:
$$
T_t(x,s) = (T^n x, s'),
$$
where $n, s$ satisfy
\begin{equation}\label{eq:satisfy}
S_n(f)(x) + s' = t + s, \text{ and } 0 \leq s' \leq f(T^n x).
\end{equation}
But now observe that
$$
T^{Kq_n}x = x + Kq_n \alpha \pmod{1}.
$$
Now, since $\Vert K q_n \alpha \Vert \leq \frac{K}{q_{n+1}}$, we have
$$
\lvert x - T^{Kq_n}x \rvert \leq \frac{K}{q_{n+1}} \leq \frac{e^{dq_n}}{e^{\delta q_n}} \leq e^{(d - \delta) q_n}.
$$
Therefore, since we have $T^{Kq_n}x \in  B_{e^{(d - \delta)q_n}}(x)$, set $t = S_{Kq_n}(f)(x)$. By applying \eqref{eq:height}, in order to satisfy \eqref{eq:satisfy} we must have that
$$
n = K q_n \text{ and } s' = s.
$$

So now since $s = s'$, taking the limit as $n \to +\infty$ we obtain the result for all points strictly inside (not on the boundary) of $\T_f$.

In the special case that $(x,s)$ is on the boundary, consider the point with which it is identified such that $s = 0$ (either the point given if $s = 0$ or the point $(Tx, 0)$ if $s = f(x)$). Flowing for time $S_{K q_n}(f)(x)$ then corresponds precisely to rotating by $\alpha$ on $\T$ exactly $Kq_n$ times, and the same argument works.
\end{proof}

\begin{lemma}\label{lem:closer}
Assume $f$ is analytic and that $\int_\T f d\mu = 1$. For all $d < \min (\delta, c')$, we have
$$
\sup_{0 \leq K \leq e^{dq_n}} \; \sup_{x\in\mathbb{T}} \; m(T_{S_{K q_n}(f)(x)} (x, s), T_{Kq_n}(x,s)) \to 0,
$$
as $n \to +\infty$
\end{lemma}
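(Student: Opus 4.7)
The plan is to use the additivity (cocycle) identity for flows: since $(T_t)$ is a flow,
\[
T_{Kq_n}(x,s) \;=\; T_{\,Kq_n - S_{Kq_n}(f)(x)}\bigl(T_{S_{Kq_n}(f)(x)}(x,s)\bigr).
\]
Setting $y_{n,K,x} := T_{S_{Kq_n}(f)(x)}(x,s)$ and $\epsilon_{n,K,x} := Kq_n - S_{Kq_n}(f)(x)$, the quantity to be controlled becomes exactly $d\bigl(y_{n,K,x},\, T_{\epsilon_{n,K,x}}(y_{n,K,x})\bigr)$, i.e.\ the displacement of a single point under the flow for a small time $\epsilon_{n,K,x}$.

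The normalization $\int_\T f\, d\mu = 1$ is exactly what converts Lemma~\ref{lem:uniform} into uniform control on this time discrepancy: for any $d$ smaller than the decay rate supplied by Lemma~\ref{lem:sum-int} (which under the standing diophantine assumption $q_{n+1} \geq c_o e^{\delta q_n}$ may be taken arbitrarily close to $\delta$ by choosing $f$ appropriately), Lemma~\ref{lem:uniform} gives
\[
\sup_{x \in \T}\; \sup_{0 \leq K \leq e^{dq_n}}\; |\epsilon_{n,K,x}|
\;=\; \sup_{x,K}\; \bigl|S_{Kq_n}(f)(x) - Kq_n\bigr| \;\longrightarrow\; 0
\]
as $n \to +\infty$. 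Thus it suffices to verify that $\sup_{y \in \T_f} d(T_\epsilon(y), y) \to 0$ as $\epsilon \to 0$, which is simply the statement that the continuous flow $(T_t)$ on the compact quotient space $\T_f$ is uniformly continuous in $t$ at $t = 0$. Combining these two observations with the factorization above yields the lemma.

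There is no serious obstacle: the proof reduces via the cocycle trick to a pointwise identity plus two standard continuity facts. The only mild technical point is that $\T_f$ has a gluing boundary $(x, f(x)) \sim (Tx, 0)$; but the quotient topology is designed so that the flow is continuous there, and uniform continuity in $t$ at $0$ on the compact space $\T_f$ follows. Conceptually, this lemma is strictly easier than Lemma~\ref{lem:close}: there one compared $T_{S_{Kq_n}(f)(x)}(x,s)$ with $(x,s)$ by spelling out the vertical identification, whereas here the cocycle identity reduces the comparison to a small time shift whose size is controlled directly by the ergodic-sum estimate from Lemma~\ref{lem:uniform}.
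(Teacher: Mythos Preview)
Your argument is correct and is precisely the unpacking of the paper's one-line proof, which reads in full: ``This follows from the continuity of the flow $(T_t)$ and Lemma~\ref{lem:uniform}.'' Your cocycle factorization $T_{Kq_n} = T_{\epsilon_{n,K,x}} \circ T_{S_{Kq_n}(f)(x)}$ together with uniform continuity of $t \mapsto T_t$ at $t=0$ on the compact space $\T_f$ is exactly how one makes that sentence rigorous, so there is no substantive difference between your approach and the paper's.
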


\begin{proof}
This follows from the continuity of the flow $(T_t)$ and Lemma \ref{lem:uniform}.
\end{proof}

\begin{corollary}\label{cor:closest}
Assume $f$ is analytic and $\int_\T f d\mu = 1$. For all $d < \min (\delta, c')$, we have
$$
\sup_{0 \leq K \leq e^{dq_n}} \; \sup_{x\in\mathbb{T}} \; m(T_{Kq_n} (x, s), (x,s)) \to 0,
$$
as $n \to +\infty$
\end{corollary}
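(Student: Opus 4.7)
The statement follows almost immediately from the two preceding lemmas by a triangle-inequality argument, so the proof plan is short. The plan is to use Lemma~\ref{lem:close}, which controls how close the flow under the \emph{actual} ergodic-sum time $S_{Kq_n}(f)(x)$ keeps the point $(x,s)$ to itself, together with Lemma~\ref{lem:closer}, which controls the discrepancy between flowing for time $S_{Kq_n}(f)(x)$ and flowing for time $Kq_n$ (this discrepancy being small because $S_{Kq_n}(f)(x)$ is close to $Kq_n \int_\T f\, d\mu = Kq_n$ by Lemma~\ref{lem:uniform}, and the flow $(T_t)$ is continuous).

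Concretely, fix $d < \delta$, a point $(x,s) \in \T_f$, and an integer $0 \leq K \leq e^{dq_n}$. Write
\begin{equation*}
d\bigl(T_{Kq_n}(x,s),\,(x,s)\bigr) \;\leq\; d\bigl(T_{Kq_n}(x,s),\,T_{S_{Kq_n}(f)(x)}(x,s)\bigr) \;+\; d\bigl(T_{S_{Kq_n}(f)(x)}(x,s),\,(x,s)\bigr).
\end{equation*}
By Lemma~\ref{lem:closer}, the first term on the right tends to $0$ as $n \to \infty$, uniformly in $0 \leq K \leq e^{dq_n}$ and in $(x,s)$. By Lemma~\ref{lem:close}, the second term on the right does too. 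Taking the supremum over $K$ and over $(x,s)$ on both sides yields the claim.

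I do not expect any genuine obstacle here: both ingredients are already in place and have the correct uniformity in $K$ and in $x$. The only minor care needed is to make sure the hypothesis $\int_\T f\, d\mu = 1$ (required by Lemma~\ref{lem:closer}) is in force, which is assumed in the statement of the corollary, and to note that the triangle inequality for $d$ preserves the uniform suprema. No further estimates are required.
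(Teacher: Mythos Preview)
Your proof is correct and matches the paper's own argument exactly: the paper also simply invokes the triangle inequality together with Lemmas~\ref{lem:close} and~\ref{lem:closer}. The uniformity and the hypothesis $\int_\T f\,d\mu = 1$ are handled just as you indicate.
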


\begin{proof}
Apply the triangle inequality along with Lemmas \ref{lem:close} and \ref{lem:closer}.
\end{proof}

We now have all the machinery required to prove the main theorem.

\begin{theorem}
Assume $f$ is analytic and $\int_\T f d\mu = 1$. Let $d<\min (\delta, c')$ be given. Define the sequence $(K_n)$ by $K_n = e^{d q_n}$. Then for any $(x,s) \in \T_f$, the sequence $(T_p(x,s))_{p < K_n q_n}$ is equidistributed.
\end{theorem}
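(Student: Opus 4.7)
The plan is to test against continuous functions: for every $g \in C(\T_f)$, verify that
\begin{equation*}
\frac{1}{\pi(K_n q_n)} \sum_{\substack{p < K_n q_n \\ p \text{ prime}}} g(T_p(x,s)) \longrightarrow \int_{\T_f} g \, d\mu_f
\end{equation*}
uniformly in $(x,s) \in \T_f$, where $\mu_f$ denotes the invariant probability measure of the special flow. The central observation is that $q_n$ is prime (by the defining Diophantine condition on $D$), so every prime $p < K_n q_n$ has a unique representation $p = K q_n + r$ with $0 \leq K < K_n$ and $0 \leq r < q_n$, and with the sole exception of $p = q_n$ itself the remainder $r$ satisfies $\gcd(r, q_n) = 1$.

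Using $T_p = T_{K q_n} \circ T_r$, Corollary \ref{cor:closest} applied at the point $T_r(x,s) \in \T_f$ gives $d(T_p(x,s), T_r(x,s)) \to 0$ uniformly in $x, s, r$ and in $K$ with $K < K_n$. Since $g$ is uniformly continuous on the compact space $\T_f$, this yields $|g(T_p(x,s)) - g(T_r(x,s))| < \epsilon$ for all such $p$ once $n$ is large. Grouping primes by residue class modulo $q_n$ therefore gives
\begin{equation*}
\sum_{\substack{p < K_n q_n \\ p \text{ prime}}} g(T_p(x,s)) = \sum_{r=1}^{q_n - 1} \pi(K_n q_n; q_n, r) \cdot g(T_r(x,s)) + O\bigl(\epsilon \, \pi(K_n q_n) + \|g\|_\infty\bigr),
\end{equation*}
with the $\|g\|_\infty$ term absorbing the singleton contribution of $p = q_n$.

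The Siegel--Walfisz hypothesis $q_n \leq (\log(K_n q_n))^2$ is easily verified since $\log(K_n q_n) \sim d q_n$; the Corollary to Siegel--Walfisz (which applies because $q_n$ is prime) then gives $\pi(K_n q_n; q_n, r) = \frac{\text{Li}(K_n q_n)}{q_n - 1}(1 + o(1))$ uniformly in $r \in \{1, \ldots, q_n - 1\}$. Substituting this, invoking the unique ergodicity of the time-$1$ map $T_1$ on $\T_f$ to obtain $\frac{1}{q_n}\sum_{r=0}^{q_n - 1} g(T_r(x,s)) \to \int_{\T_f} g \, d\mu_f$ uniformly in $(x,s)$, and finally dividing by $\pi(K_n q_n) \sim \text{Li}(K_n q_n)$ via the prime number theorem, produces
\begin{equation*}
\limsup_{n \to \infty} \; \sup_{(x,s) \in \T_f} \left| \frac{1}{\pi(K_n q_n)} \sum_{\substack{p < K_n q_n \\ p \text{ prime}}} g(T_p(x,s)) - \int_{\T_f} g \, d\mu_f \right| = O(\epsilon).
\end{equation*}
Letting $\epsilon \to 0$ concludes.

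The main obstacle is not any single hard estimate but the simultaneous calibration of three separate approximations---the near-identity behavior of $T_{K q_n}$ over the exponentially long range $K \leq e^{d q_n}$ from Corollary \ref{cor:closest}, the equidistribution of primes in residue classes modulo $q_n$ from Siegel--Walfisz, and the uniform Birkhoff convergence for $T_1$---so that all three error terms vanish together after normalizing by $\pi(K_n q_n)$. These are quantitatively compatible precisely because the Diophantine condition $q_{n+1} \geq c_o e^{\delta q_n}$ and the choice $d < \delta$ tune the three scales to match; the remainder is bookkeeping of error terms.
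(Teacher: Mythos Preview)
Your proposal is correct and follows essentially the same route as the paper: split the prime sum into residue classes modulo $q_n$, use Corollary~\ref{cor:closest} to replace $T_p$ by $T_r$ on each class, apply Siegel--Walfisz to count primes in each class, and conclude by unique ergodicity of $T_1$. If anything, you are slightly more careful than the paper---you explicitly verify the hypothesis $q_n \leq (\log(K_n q_n))^2$ and isolate the stray term $p = q_n$, both of which the paper leaves implicit.
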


\begin{proof}
Let $g \in C(\T_f)$ be given. We must show that
$$
\frac{1}{\pi(K_n q_n)} \sum_{p < K_n q_n} g(T_p(x,s)) \to \int_{\T_f} g d\nu.
$$
First, we will rewrite this sum as a double sum along residue classes modulo $q_n$; for $p \equiv a \pmod{q_n}$, we will write $p = k_p q_n + a$:
$$
\sum_{p < K_n q_n} g(T_p(x,s)) = \sum_{a < q_n} \left( \sum_{\substack{p = k_p q_n + a \\ p < K_n q_n}} g(T_p(x,s)) \right) = \sum_{a < q_n} \left( \sum_{\substack{p = k_p q_n + a \\ p < K_n q_n}} g(T_{k_p q_n + a}(x,s)) \right).
$$
Now, since
$$
T_{k_p q_n + a}(x,s) = T_{k_p q_n}(T_a(x,s)) \to T_a(x,s),
$$
uniformly in $(x,s)$ as $n \to +\infty$ (by Corollary \ref{cor:closest}), we see that
\begin{equation}\label{eq:bigone}
\frac{1}{\pi(K_n q_n)} \biggr\lvert \sum_{p < K_n q_n} g(T_p(x,s)) - \sum_{a < q_n} \left( \sum_{\substack{p = k_p q_n + a \\ p < K_n q_n}} g(T_a(x,s)) \right) \biggr\rvert \to 0,
\end{equation}
uniformly in $(x,s)$ (applying continuity of $g$). Now we may rewrite the double sum
$$
\sum_{a < q_n} \left( \sum_{\substack{p = k_p q_n + a \\ p < K_n q_n}} g(T_a(x,s)) \right) = \sum_{a < q_n} g(T_a(x,s)) \cdot \pi(K_n q_n; q_n, a).
$$
So, replacing the double sum in \eqref{eq:bigone} using the previous identity and distributing the outside factor, we get
$$
\biggr\lvert \frac{1}{\pi(K_n q_n)} \sum_{p < K_n q_n} g(T_p(x,s)) - \frac{1}{\pi(K_n q_n)} \sum_{a < q_n} g(T_a(x,s)) \cdot \pi(K_n q_n; q_n, a) \biggr\rvert \to 0.
$$
Finally, for the second term in this difference, we apply the Prime Number Theorem on $\pi(K_n q_n)$ (to replace it with $Li(K_n q_n)$) and Corollary \ref{cor:SW} on $\pi(K_n q_n; q_n, a)$ (to replace it with $Li(K_n q_n)/(q_n-1)$). We obtain
$$
\biggr\lvert \frac{1}{\pi(K_n q_n)} \sum_{p < K_n q_n} g(T_p(x,s)) - \frac{1}{Li(K_n q_n)} \sum_{a < q_n} g(T_a(x,s)) \cdot \frac{Li(K_n q_n)}{q_n-1} \biggr\rvert \to 0,
$$
hence, cancelling, we have
$$
\biggr\lvert \frac{1}{\pi(K_n q_n)} \sum_{p < K_n q_n} g(T_p(x,s)) - \frac{1}{q_n-1} \sum_{a< q_n} g(T_a(x,s)) \biggr\rvert \to 0.
$$
Now, since $T_1$ is uniquely ergodic, the sequence $(T_n(x,s))_{n \in \N}$ is equidistributed for all $(x,s) \in \T_f$, meaning precisely that
$$
\frac{1}{q_n-1} \sum_{a< q_n} g(T_a(x,s)) \to \int_{\T_f} g d\nu,
$$
and therefore
$$
\frac{1}{\pi(K_n q_n)} \sum_{p < K_n q_n} g(T_p(x,s)) \to \int_{\T_f} g d\nu,
$$
which completes the proof.
\end{proof}

\begin{corollary}
For any $(x,s) \in \T_f$, the sequence $(T_p(x,s))$ is dense in $\T_f$.
\end{corollary}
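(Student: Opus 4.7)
The plan is to deduce density from the equidistribution statement in the preceding theorem by the standard trick of testing against a carefully chosen continuous bump function. More precisely, given any non-empty open set $U \subset \T_f$, I would try to exhibit a prime $p$ such that $T_p(x,s) \in U$; ranging over a countable base of open sets then gives density of the prime orbit.

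To carry this out, fix $(x,s) \in \T_f$ and let $U \subset \T_f$ be an arbitrary non-empty open set. I would first choose a continuous non-negative function $g: \T_f \to \R$ supported inside $U$ and not identically zero; the key point I need is that $\int_{\T_f} g \, d\nu > 0$. To guarantee this, I would invoke that the reparametrized flow $(T^{\alpha,r}_t)$ on $\T^2$ is a time change of the minimal linear flow of irrational slope $\alpha$, hence is itself minimal, and therefore the unique invariant measure $\nu$ (equivalently $\mu_r$ under the conjugacy) has full support on $\T_f$. With full support, any non-trivial non-negative $g$ supported in an open set integrates to something strictly positive.

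Next, I apply the preceding theorem with this specific $g$: it yields
\[
\frac{1}{\pi(K_n q_n)} \sum_{p < K_n q_n} g(T_p(x,s)) \;\longrightarrow\; \int_{\T_f} g \, d\nu \;>\; 0,
\]
so for all sufficiently large $n$ the left-hand side is strictly positive. Since it is a sum of non-negative terms, at least one prime $p < K_n q_n$ must satisfy $g(T_p(x,s)) > 0$, and by the support condition this forces $T_p(x,s) \in U$. Since $U$ was arbitrary, the set $\{T_p(x,s) : p \text{ prime}\}$ meets every non-empty open set and is therefore dense in $\T_f$.

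The only real subtlety is the full-support claim for $\nu$; everything else is a mechanical extraction from the theorem. I would expect the cleanest way to justify full support is to note that minimality of $(T^{\alpha,r}_t)$ is inherited from the irrational linear flow on $\T^2$ via the smooth conjugacy discussed in Section 3.5, combined with the general fact that the support of an invariant probability measure of a continuous flow is a closed invariant set, which under minimality must be the entire space.
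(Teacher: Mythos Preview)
Your argument is correct and is exactly the standard route from equidistribution (along a subsequence of cutoffs) to density: test against a nonnegative bump supported in an arbitrary open set, use full support of $\nu$ to make the integral positive, and read off a prime hitting the set. The paper states this corollary without proof, treating it as immediate from the preceding theorem; what you have written is precisely the implicit argument, including the one point that actually needs a word of justification---that $\nu$ has full support---which you handle correctly via minimality of the reparametrized (equivalently, special) flow.
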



\begin{thebibliography}{9}

\bibitem{Bourgain2}
Bourgain, J. (2011). \it On the correlation of the Moebius function with random rank-one systems. \normalfont

\bibitem{Kan}
Kanigowski, A. {\em Prime orbits for some smooth flows on $\T^2$}, preprint, arXiv: 2005.09403.

\bibitem{KLR}
Kanigowski, A.,  Lema{\'n}czyk, M., Radziwi\l\l, M. {\em 
Prime number theorem for analytic skew products}, preprint, arXiv: 2004.01125


\bibitem{Vinogradov}
Vinogradov, I. M. (1947). \it The method of trigonometrical sums in the theory of numbers. \normalfont Trav. Inst. Math.
Stekloff, 23-109.

\bibitem{Green-Tao}
Green, B., \& Tao, T. (2012). \it The Möbius function is strongly orthogonal to nilsequences. \normalfont Annals of Mathematics, 175(2), 541-566. Retrieved January 26, 2020, from www.jstor.org/stable/23234622

\bibitem{Fayad}
Fayad, B. (2002). \it Weak mixing for reparameterized linear flows on the torus. \normalfont Ergodic Theory and Dynamical Systems, 22(1), 187-201. doi:10.1017/S0143385702000081


\end{thebibliography}
\end{document}